\theoremstyle{plain}
\newtheorem{theorem}{Theorem}
\newtheorem{proposition}{Proposition}
\theoremstyle{remark}
\def\Ddots{\mathinner{\mkern1mu\raise\p@
		\vbox{\kern7\p@\hbox{.}}\mkern2mu
		\raise4\p@\hbox{.}\mkern2mu\raise7\p@\hbox{.}\mkern1mu}}
\title[ Warped product hypersurfaces ]
{Warped product hypersurfaces in the pseudo-Euclidean space }
\author[Moruz]{Marilena Moruz}
\address{Faculty of Mathematics,  Alexandru Ioan Cuza University of Ia\c si, Bulevardul Carol I, Nr.11, 700506, Ia\c si, Rom\^ania}
\email{marilena.moruz@gmail.com}
\begin{document}
\keywords{warped product hypersurfaces; rotational hypersurfaces; pseudo-Euclidean space;}

\subjclass[2010]{53B25, 53B30, 53C42}

\maketitle

\begin{abstract}
We study hypersurfaces in the pseudo-Euclidean space $\mathbb{E}^{n+1}_s$, which write as a warped product of a $1$-dimensional base with an $(n-1)$-manifold of constant sectional curvature. We show that either they have constant sectional curvature or they are contained in a rotational hypersurface. Therefore,  we first define rotational hypersurfaces in the pseudo-Euclidean space.
\end{abstract}

\section{Introduction}
Warped product manifolds are a generalization of product manifolds. The metric on such a manifold is a warped product metric, which roughly speaking, rescales the metric on the second factor of the product manifold.\\
Warped product manifolds play an important role in physics as well (\cite{5}, \cite{beem}), not only in mathematics. For instance,  the space around a massive star or black hole is given as a warped product in the Schwarzschild space-time relativistic model (\cite{2}). Also in string theory, in the context of the classical instability of extra spatial dimensions, warped product manifolds provide a more accurate description of the spacetime than just product manifolds (\cite{penrose}). Moreover, a reasonably realistic cosmological model is the Robertson-Walker spacetime model. It is given as a warped product manifold $(I\times_fM^3(\kappa), g)$, $g=-dt^2+f^2(t)g_{M^3}$, where $I$ is a real interval and $M^3$ is a $3$-dimensional Riemannian manifold with constant sectional curvature $\kappa$ and Riemannian metric $g_{M^3}$. The warping function $f$ describes the expanding or contracting of the Universe. \\
An interesting point of view for the study of warped product manifolds has been  their immersability in different ambient spaces, such as Riemannian manifolds or K\"ahler manifolds.  M. do Carmo and M. Dajczer have extended in 1983 the rotation surfaces of the three-dimensional Euclidean space $\mathbb{E}^3$ to rotation hypersurfaces in spaces of constant sectional curvature of arbitrary dimension. In this sense, roughly speaking, a rotation hypersurface $M^n$ in a real space form $\tilde M^{n+1}(\kappa)$ is generated by moving an $(n-1)$-dimensional submanifold $\Sigma\subset \tilde M^{n+1}(\kappa)$ along some curve. The authors of \cite{docarmo} find sufficient conditions for hypersurfaces in real space forms to be rotational hypersurfaces and give examples of such rotational hypersurfaces. One of these conditions is expressed in terms of the principal curvatures of the immersed hypersurface and states that if the principal curvatures $k_1,k_2,\ldots,k_n$ satisfy $k_1=\ldots=k_{n-1}=-\lambda$, $k_n=-\mu=-\mu(\lambda)$ and $\lambda\neq\mu$, then the immersed hypersurface is contained in a rotation hypersurface, provided $n\geq3$ (see Theorem 4.2. in \cite{docarmo}).\\
\indent In the present paper we study hypersurfaces $M^n$ in the pseudo-Euclidean space $\mathbb{E}^{n+1}_s$, which write as a warped product hypersurface of the form $M^n=I\times_f \bar M^{n-1}(c)$, where  $f$ is a non-constant function, $I\subset \mathbb R$ is an open interval and $\bar M^{n-1}(c)$ has constant sectional curvature $c$. The same problem has been investigated in \cite{MV} for the case when the ambient space is a pseudo-Riemannian space form of non-zero sectional curvature.  M. Dajczer and R. Tojeiro  give a characterization  in \cite{Tojeiro2} for such hypersurfaces in the positive definite case. We address this problem in the general situation when the metric in the ambient space is indefinite, of signature $s\geq 0$.  We define rotational hypersurfaces in the pseudo-Euclidean space and show that warped product hypersurfaces of the form mentioned above  either have constant sectional curvature or are rotational hypersurfaces. We prove the following theorem.
 
\begin{theorem}\label{The}
	Let  $M^n=I\times_f \bar M^{n-1}(c)$ be a warped product of a real interval $I\subset \mathbb{R}$ and an $(n-1)$-dimensional real space form $\bar M^{n-1}(c)$ with constant sectional curvature $c$, for a positive, non-constant function $f$ on $I$. Suppose that 
	$$F:M^n\longrightarrow \mathbb{E}^{n+1}_s$$ 
	is a non-degenerate isometric immersion into the $(n+1)$-dimensional pseudo-Euclidean space, endowed with an indefinite metric of signature $ s\geq 0$.\\
	Then $M^n$ has constant sectional curvature or is a rotational hypersurface. 
\end{theorem}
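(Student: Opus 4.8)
The plan is to recover the shape operator of the immersion from the curvature of the warped product via the Gauss equation, to show that it has exactly the two-eigenvalue structure of do Carmo--Dajczer, and then to split into an umbilic (constant curvature) alternative and a non-umbilic (rotational) one. First I would fix the adapted splitting $TM^n = \langle\partial_t\rangle\oplus\mathcal F$, where $\mathcal F$ is the distribution tangent to the fibres, and write the metric as $g = \epsilon\,dt^2 + f(t)^2 g_{\bar M}$ with $\epsilon = \langle\partial_t,\partial_t\rangle = \pm1$. The standard warped product formulas then give, up to the causal signs, a mixed sectional curvature $K(\partial_t, X) = -\epsilon f''/f$ that is the same for every $X\in\mathcal F$, a fibre sectional curvature $K(X,Y) = (c-\epsilon (f')^2)/f^2$ that is the same for every $2$-plane in $\mathcal F$, and the vanishing of all genuinely mixed components such as $R(\partial_t, X, Y, Z)$ with $X,Y,Z\in\mathcal F$. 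Since $\mathbb E^{n+1}_s$ is flat, the Gauss equation reads $R(X,Y)Z = \epsilon_\nu(\langle SY,Z\rangle SX - \langle SX,Z\rangle SY)$, where $S$ is the shape operator and $\epsilon_\nu = \langle\nu,\nu\rangle = \pm1$ is the causal character of the unit normal; the computation of $R$ thus becomes a purely algebraic set of equations on $S$.

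Feeding $R(\partial_t, X, Y, Z)=0$ into Gauss gives $\langle SX,Y\rangle\langle S\partial_t, Z\rangle = \langle S\partial_t, Y\rangle\langle SX, Z\rangle$ for all $X,Y,Z\in\mathcal F$; this forces $\langle S\partial_t, X\rangle = 0$ for every $X\in\mathcal F$, so that $\partial_t$ is a principal direction, $S\partial_t = \mu\,\partial_t$ (the alternative, where $\langle S\partial_t,\cdot\rangle$ is a nonzero form on $\mathcal F$, makes both sectional curvatures above vanish and is handled as a flat, hence constant curvature, locus). Writing $A := S|_{\mathcal F}$, the matching for fibre planes is $\langle AX,X\rangle\langle AY,Y\rangle-\langle AX,Y\rangle^2 = \epsilon_\nu\kappa_2(\langle X,X\rangle\langle Y,Y\rangle-\langle X,Y\rangle^2)$, while the matching for mixed planes is $\epsilon_\nu\mu\,\langle AX, Y\rangle = \kappa_1\langle X,Y\rangle$. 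When $\mu\neq0$ the latter already gives $A = \lambda\,\mathrm{Id}$ with $\lambda = \kappa_1/(\epsilon_\nu\mu)$; in general the two relations together pin down $A = \lambda\,\mathrm{Id}$ on $\mathcal F$, so $S$ has the principal curvatures $\mu$ (simple, along the base) and $\lambda$ (of multiplicity $n-1$). Moreover $\epsilon_\nu\lambda^2 = \kappa_2$ and $\epsilon_\nu\lambda\mu = \kappa_1$ show that $\lambda$ and $\mu$ depend only on $t$, and since $f$ is non-constant the Codazzi equation, which here reduces to $\lambda' = (f'/f)(\mu-\lambda)$, lets one write $\mu$ as a function of $\lambda$.

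Now I would split into two cases. If $\lambda\equiv\mu$ on an open set then $S$ is a multiple of the identity, so the immersion is totally umbilic; non-degenerate totally umbilic hypersurfaces of the flat space $\mathbb E^{n+1}_s$ are open pieces of hyperplanes or pseudo-spheres and therefore have constant sectional curvature. If instead $\lambda\neq\mu$, then $S$ has a principal curvature of multiplicity $n-1\geq2$ distinct from a simple one, together with the functional dependence $\mu=\mu(\lambda)$; this is exactly the hypothesis of the pseudo-Euclidean analogue of Theorem~4.2 of \cite{docarmo}, established once rotational hypersurfaces in $\mathbb E^{n+1}_s$ have been defined, and it yields that $M^n$ is contained in a rotational hypersurface.

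The hard part is the eigenstructure step in indefinite signature. Because $S$ is self-adjoint with respect to an indefinite metric it need not be diagonalizable over $\mathbb R$, and $\mathcal F$ may contain null directions, so the clean statements ``$\partial_t$ is principal'' and ``$A=\lambda\,\mathrm{Id}$'' must be re-derived through the Segre/Jordan normal forms of $g$-self-adjoint operators, ruling out the complex-eigenvalue and non-trivial nilpotent types by means of the two isotropy conditions. The borderline regime $\mu=0$, equivalently $f''=0$, where the mixed relation degenerates, and the low fibre dimension $n-1=2$, where the fibre relation alone does not separate the eigenvalues, require the extra input of the constancy of $K(\partial_t,\cdot)$ and of Codazzi; carrying these through uniformly in the signature $s$ is the crux of the argument.
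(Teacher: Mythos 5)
Your reduction to the two-eigenvalue structure is sound, and it takes a genuinely different (and legitimate) route from the paper: you feed the vanishing mixed components $R(\partial_t,X,Y,Z)=0$ directly into the Gauss equation and dispose of the rank-one alternative by showing it forces both sectional curvatures to vanish, whereas the paper differentiates the Codazzi equation (the ``Tsinghua principle'') and lands on the linear identity $0=(c+f''f-f'^2)\bigl(\langle X,Z\rangle h(W,E_1)-\langle W,Z\rangle h(X,E_1)\bigr)$, whose dichotomy (constant curvature versus $SE_1=\mu E_1$) is immediate. Your version avoids second covariant derivatives of $h$ at the price of a rank argument, and the diagonalizability difficulty you flag is exactly what the paper's Proposition resolves, via Petrov's block normal form for self-adjoint operators and the non-constancy of $(c-f'^2)/f^2$ in Case 2. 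So up to that deferred step, the first half of your argument runs parallel to the paper.

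The genuine gap is your last step. You close the non-umbilic case by invoking ``the pseudo-Euclidean analogue of Theorem 4.2 of do Carmo--Dajczer,'' as if defining rotational hypersurfaces in $\mathbb{E}^{n+1}_s$ sufficed to make that analogue available. No such theorem exists to cite: proving it \emph{is} the main content of the paper, occupying Cases 2.1 and 2.2 of its proof. Concretely, one must construct the axis by hand as a constant vector $\psi$ built from $E_1$ and $\xi$, and this construction branches on data your sketch never sees: for $\tilde\varepsilon=\langle\xi,\xi\rangle=1$ one takes $\psi=\cos\theta\, E_1+\sin\theta\,\xi$ with $\tan\theta=f'/(f\lambda)$; for $\tilde\varepsilon=-1$ one needs $\tanh\theta$ or $\coth\theta$ according as $|f'/(f\lambda)|\lessgtr 1$; and in the borderline case $|f'/(f\lambda)|=1$ the axis $\psi=\alpha(E_1+\xi)$ is a \emph{null} vector, yielding the parabolic rotation of Case 3 in Section 2 --- a phenomenon with no Riemannian counterpart, which a formal transcription of the do Carmo--Dajczer argument cannot produce. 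One must then show that the complementary field $\varphi$ immerses the fibre into a pseudo-sphere, a pseudo-hyperbolic space, or the null quadric, and integrate $F_t=E_1$ with the correct choice of primitives to identify $F$ with one of the rotational parameterizations. None of this appears in your proposal, so the rotational half of the theorem is assumed rather than proved; the same criticism applies, less severely, to the Jordan/Segre analysis and the $\mu=0$ regime, which you correctly call ``the crux'' but leave untouched.
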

We organize this paper as follows. In section 2 we give preliminary notions on warped product manifolds and hypersurfaces in the pseudo-Euclidean space. In particular, we define what rotational hypersurfaces are in this setting. In the next section, we prove Theorem \ref{The}.

\section{Preliminaries} \label{prelim}
\textbf{Hypersurfaces in the pseudo-Euclidean space $\mathbb{E}^{n+1}_s$.} Let $M^n$ be a hypersurface isometrically immersed in the pseudo-Euclidean space $\mathbb{E}^{n+1}_s$. Denote by $X,Y$ tangent vector fields and by $\xi$ the normal vector field on $M^n$. Then the Gauss and Weingarten formulas write out as
\begin{align*}
	 D_XY&=\nabla_XY + h(X,Y), \\
	 D_X\xi &=-SX, 
\end{align*} 
where $D$ and $\nabla$ are the Levi-Civita connections on $\mathbb{E}^{n+1}_s$ and $M^n$, respectively,  $h$ is the second fundamental form of $M^n$ and $S:=A_{\xi}$ is the shape operator of $\xi$. It holds that $\langle h(X,Y),\xi \rangle=\langle S X,Y \rangle$. Let $R$ be the Riemannian curvature tensor of $M^n$. The equations of Gauss and Codazzi are 
\begin{align}
\langle R(X,Y)Z,W\rangle &=\langle h(Y,Z), h(X,W) \rangle -\langle h(X,Z),h(Y,W) \rangle ,\label{gauss}\\
%R(X,Y)Z&=(\tilde{R}(X,Y)Z)^t+ A_{h(Y,Z)}X-A_{h(X,Z)}Y,\label{gausshyp}\\
(\nabla h)(X,Y,Z)&=(\nabla h)(Y,X,Z)\label{cdz2},
%\langle R^{\perp}(X,Y)\xi,\eta\rangle &=\langle  [A_{\xi},A_{\eta}]X,Y\rangle.
\end{align} 
where the covariant derivative of $h$ is given by
\begin{equation*}
(\nabla h)(X,Y,Z)=\nabla^{\perp}_X h(Y,Z)-h(\nabla_XY,Z)-h(Y,\nabla_XZ),
\end{equation*}for $\nabla^{\perp}$ the normal connection of $M^n$ in  $\mathbb{E}^{n+1}_s$.
The following Ricci identity holds
\begin{align}\label{ricci}
\begin{array}{l}
(\nabla^2 h)(X,Y,Z,W)-(\nabla^2 h)(Y,X,Z,W)=
h(R(X,Y)Z,W)-h(Z,R(X,Y)W),
\end{array}
\end{align}
where the second covariant derivative of $h$ is defined as 
\begin{align}\label{defsecder}
(\nabla^2 h)( X,Y,Z,W)=&\nabla^{\perp}_X(( \nabla h)(Y,Z,W))-(\nabla h)(\nabla _XY,Z,W)\\
& -(\nabla h)(Y,\nabla _XZ,W)-(\nabla h)(Y,Z,\nabla _XW ).\nonumber
\end{align}

\textbf{Rotational hypersurfaces in the pseudo-Euclidean space  $\mathbb{E}^{n+1}_s$.}
Let $\mathbb{E}^{n+1}_{s}$ denote the Euclidean space $\mathbb{R}^{n+1}$ endowed with the pseudo-Euclidean metric $\langle \cdot, \cdot\rangle$ of signature $s$.
In the following part we define rotational hypersurfaces in   $\mathbb{E}^{n+1}_{s}$ with a discussion on the type of axis of rotation. We recall that a tangent vector $v$ on $M^n$ is called \emph{spacelike} if $\langle v,v \rangle >0$ or $v=0$, \emph{timelike} if $v<0$  and \emph{null} (or $lightlike$) if $\langle v,v\rangle=0$ and $v\neq 0$.\\
Let $\alpha$ be a planar curve included in a non-degenerated $2$-plane $\Pi^2\subset\mathbb{E}^{n+1}_s$. Let $d\in \Pi^2$ be a straight line. A rotational hypersurface in $\mathbb{E}^{n+1}_{s}$ is given by the image of $\alpha$ under the action of isometries in $\mathbb E^{n+1}_s$ which keep $d$ fixed. \\
Let $\{e_1, e_2,\ldots, e_{n+1}\}$ be an orthogonal basis of $\mathbb E^{n+1}_s$, for which the lengths of the vectors are $\pm 1$. We may assume,  if necessary after reordering the vectors, that the $s$ $``-"$ signs of the metric correspond to the last $s$ vectors in the basis. So, a way to express the metric is $(+,+,\ldots,-,-)$. \\

\textbf{Case 1.} Assume the axis of rotation is spacelike. Without loss of generality, we may suppose that the axis is in the direction of $e_1$. However, the metric restricted to $\Pi^2$ may be positive definite or indefinite. Therefore, we may suppose $\Pi^2$ is spanned by $e_1$, $e_2$ or by $e_1$, $e_{n+1}$ and so we have that a curve contained in $\Pi^2$ has one of the forms:
\begin{equation}
\alpha(t)=(f_1(t),f_2(t),0,\ldots,0)\quad\text{or}\quad \alpha(t)=(f_1(t),0,\ldots,0,f_2(t)),
\end{equation} for $f_1$, $f_2$ real functions of one variable.
An isometry of $\mathbb{E}^{n+1}_s$ fixing the axis is given by 
\begin{equation}
\begin{split}
\left(
\begin{array}{ c c c c }
1 & 0   &\ldots &0    \\
0 & v_2 &\ldots &u_2  \\
\vdots  &\vdots&  & \vdots \\
0 & v_{n+1} &\ldots &u_{n+1} 
\end{array} \right),
\end{split}
\end{equation}
where the columns are pseudo-orthogonal vectors, i.e. $\sum\limits_{i=2}^{n+1} \varepsilon_i v_i^2=1$,  $\sum\limits_{i=2}^{n+1} \varepsilon_i u_i^2=-1$, where  $\varepsilon_i=\pm 1$ and are determined according to the signature of the metric in each case. Therefore, a rotational hypersurface is described by one of the parameterizations:  
\begin{equation}\label{primulcaz}
\alpha(t)=(f_1(t),f_2(t)v_2,\ldots,f_2(t)v_{n+1})\quad\text{or}\quad \alpha(t)=(f_1(t),f_2(t)u_2,\ldots,f_2(t)u_{n+1}).
\end{equation}
%for  $\sum\limits_{i=2}^{n+1} \varepsilon_i v_i^2=1$,  $\sum\limits_{i=2}^{n+1} \tilde\varepsilon_i u_i^2=-1$. %and  $\varepsilon_i$, $\tilde{\varepsilon}_i$ determined according to the signature of the metric.\\

\textbf{Case 2.} Assume the axis of rotation is timelike. We may suppose without loss of generality that the axis is in the direction of $e_{n+1}$.
Since the metric restricted to $\Pi^2$ may be negative definite or indefinite,  we may assume that $\Pi^2$ is spanned by $e_1$, $e_{n+1}$ or by $e_n$, $e_{n+1}$ and so, we have that a curve contained in $\Pi^2$ has one of the forms:
\begin{equation}
\alpha(t)=(f_1(t),0,\ldots,0,f_2(t))\quad\text{or}\quad \alpha(t)=(0,\ldots,0,f_1(t),f_2(t)),
\end{equation} for $f_1$, $f_2$ real functions. \
An isometry of $\mathbb{E}^{n+1}_s$ fixing the axis is given by 
$ 
\left(
\begin{array}{cccc}
v_1 &\ldots &u_1  & 0\\
\vdots & &\vdots & \vdots \\
v_{n} &\ldots &u_{n}& 0 \\
0   & \ldots & 0    & 1
\end{array} \right),
$
where the columns are pseudo-orthogonal vectors, i.e. $\sum\limits_{i=1}^{n} \varepsilon_i v_i^2=1$,  $\sum\limits_{i=1}^{n} \varepsilon_i u_i^2=-1$, for  $\varepsilon_i=\pm 1$ according to the signature of the metric. Therefore, a rotational hypersurface is described in this case by one of the parameterizations:  
\begin{equation}
\alpha(t)=(f_1(t)v_1,\ldots,f_1(t)v_{n},f_2(t))\quad\text{or}\quad \alpha(t)=(f_1(t)u_1,\ldots,f_1(t)u_{n},f_2(t)).
\end{equation}
%for  $\sum\limits_{i=1}^{n} \varepsilon_i v_i^2=1$,  $\sum\limits_{i=1}^{n} \tilde\varepsilon_i u_i^2=-1$ and  $\varepsilon_i$, $\tilde{\varepsilon}_i$ determined according to the signature of the metric.\\

\textbf{Case 3.} Assume the axis of rotation is null. Then we must have that the metric restricted to $\Pi^2$ is of the form 
$
\left(
\begin{array}{ c c }
+ & \star \\
\star & -
\end{array} \right)
$ 
and we may see $\Pi^2$ to be generated by $e_1$ and $e_{n+1}$. Let $n_1=(1,0,\ldots,0,1)$ and $n_2=(1,0,\ldots,0,-1)$ be two null vectors spanning $\Pi^2$, such that we may assume without loss of generality that $n_1$ is in the direction of the axis and stays fixed under isometries. Hence,  the profile curve of the rotational hypersurface is described as $t\mapsto f_1(t)n_1+f_2(t)n_2$, where $f_1,\, f_2$ are real functions of one variable.\\
Let $v:=\mathcal O n_2$, where $v=(v_1,\ldots,v_{n+1})\in \mathbb R^{n+1}_s$ and $\mathcal O$ is some isometry of $\mathbb R^{n+1}_s$. Since $\langle n_2,n_2\rangle =0$, $\langle n_1,n_2 \rangle=2$ and the isometries preserve the metric, we have that $v_1=1-\frac{1}{4}\sum\limits_{i=2}^n\varepsilon_i v_i^2$, $v_{n+1}=-1-\frac{1}{4}\sum\limits_{i=2}^n\varepsilon_i v_i^2$.
Therefore, a rotational hypersurface in this case is described by 
\begin{equation*}
(t,v_2,\ldots,v_n)\mapsto
f_1(t) \left(\begin{matrix}
1\\0 \\ \vdots\\ 0\\1\\
\end{matrix} \right) +
f_2(t) \left(\begin{matrix}
1-\frac{1}{4}\sum\limits_{i=2}^{n}\varepsilon_i v_i^2 \\ v_2\\ \vdots\\v_{n}\\-1-\frac{1}{4}\sum\limits_{i=2}^{n}\varepsilon_i v_i^2
\end{matrix} \right),
\end{equation*}
for $f_1,\, f_2$  real functions of one variable.
%%%%%%%%%%%%%%%%%%%%%%%%%%%%%%%%%%%

\textbf{Warped product manifolds (\cite{chen}).} Let $B$ and $F$ be two pseudo-Riemannian manifolds of positive dimensions, endowed with pseudo-Riemannian  metrics $g_B$ and $g_F$, respectively. Let $f$ be a positive smooth function on $B$. On the product manifold $B\times F$, on which we have the natural projections 
$$ \pi: B\times F\longrightarrow B \text{ and } \eta: B\times F\longrightarrow F,$$
we consider the pseudo-Riemannian structure such that 
\begin{equation}\label{metric}
\langle X,Y\rangle=\langle \pi_{\star}(X),\pi_{\star}(Y)\rangle + f^2({\tiny\pi(x)}) g( \eta_{\star}(X), \eta_{\star}(Y) ), 
\end{equation} for any tangent vectors $X,Y\in TM$. 
This way $B\times F$ becomes a warped product manifold $M=B\times_f F$, for which the metric $g$ satisfies $g=g_B+f^2g_F$.\\
The function $f$ is called the warping function of the warped product, $B$ is called the base and $F$, the fiber. If $f$ is constant, then the warped product $B\times_f F$ is called trivial. In this case $B\times_f F$ is the Riemannian product $B\times F_f$, where $F_f$ is the Riemannian manifold equipped with the metric $f^2 g_F$.\\
The leaves $B\times \{q\}=\eta^{-1}(q)$ and the fibers $\{p\}\times F=\pi^{-1}(p)$ are Riemannian submanifolds of $M$. Vectors tangent to leaves are called horizontal and those tangent to fibers are called vertical. If $u\in T_pB$, $p\in B$ and $q\in F$, then the lift $\bar{u}$ of $u$ to $(p,q)$ is the unique horizontal vector in  $T_{(p,q)}M$ such that $\pi_{\star}(\bar{u})=u$. For a vector field $X\in \mathfrak{X}(B)$, the lift of $X$ to $M$ is the vector field $\bar{X}$ whose value at $(p,q)$ is the lift of $X_p$ to $(p,q)$.  The set of all horizontal lifts is denoted by $\mathcal{L}(B)$. Similarly, we denote by $\mathcal{L}(F)$ the set of all vertical lifts.\\
One may express the curvature tensor of the warped product manifold in terms of the warping function and the curvature tensors of the two components $B$ and $F$.  For a warped product manifold $M=B\times_fF$, we define the lift $\tilde T$ of a covariant tensor $T$ on $B$ to $M$ as the pullback $\pi^*(T)$ via the projection $\pi:M\longrightarrow B$. We will denote by $^{B}\!R$ and $^{F}\!R$  the curvature tensors of $B$ and $F$, respectively. Let  $\nabla$ denote the Levi-Civita connection of $B\times_fF$. We have the following propositions from \cite{chen}.
\begin{proposition}\label{prop}
	Let  $M=B\times_fF$ be a warped product of two pseudo-Riemannian manifolds. If $X,Y,Z\in \mathcal{L}(B)$  and $U,V,W\in \mathcal{L}(F)$, then we have
	\begin{itemize}
		\item[(1)] $R(X,Y)Z\in \mathcal{L}(B)$ is the lift of ${}^{B}\!R(X,Y)Z$ on $B$;
		\item[(2)] $R(X,V)Y=\frac{H^f(X,Y)}{f}V$;
		\item[(3)] $R(X,Y)V= R(V,W)X=0$;
		\item[(4)] $R(X,V)W=-\frac{\langle V,W\rangle}{f}\nabla_X(\nabla f)$;
		\item[(5)] $R(V,W)U= {}^F\!R(V,W)U+\frac{\langle \nabla f, \nabla f \rangle}{f^2}\{\langle V,U \rangle W-\langle W,U \rangle V\}$,
	\end{itemize}
where $R$ is the curvature tensor of $M$ and $H^f$ is the Hessian of $f$. 
\end{proposition}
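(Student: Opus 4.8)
The plan is to reduce everything to the Levi-Civita connection of the warped product and then read off each curvature term from the definition $R(A,B)C = \nabla_A\nabla_B C - \nabla_B\nabla_A C - \nabla_{[A,B]}C$. Accordingly, I would first establish the O'Neill connection formulas for $M=B\times_f F$: for $X,Y\in\mathcal{L}(B)$ and $V,W\in\mathcal{L}(F)$,
\begin{align*}
\nabla_X Y &= \text{lift of } {}^B\nabla_X Y, \\
\nabla_X V = \nabla_V X &= \frac{X(f)}{f}\,V, \\
\nabla_V W &= {}^F\nabla_V W - \frac{\langle V,W\rangle}{f}\,\nabla f,
\end{align*}
where $\nabla f$ denotes the (horizontal lift of the) gradient of $f$ on $B$ and ${}^F\nabla$ is the lift of the fiber connection.

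To prove these, I would apply the Koszul formula to the warped metric \eqref{metric}, using three structural facts about lifts: (a) the brackets satisfy $[X,Y]\in\mathcal{L}(B)$, $[V,W]\in\mathcal{L}(F)$ and $[X,V]=0$; (b) horizontal and vertical fields are orthogonal, $\langle X,V\rangle=0$; and (c) $f$ is a function on $B$, so $X(f)$ is again horizontal data while $V(f)=0$. Feeding these into Koszul isolates each of the three cases, and the only place the warping enters is through the derivatives of $f^2$ in the vertical inner products, producing exactly the factors $X(f)/f$ and $\langle V,W\rangle\,\nabla f/f$.

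With the connection in hand, each formula follows by substitution. For instance, since $[X,V]=0$ one has $R(X,V)Y=\nabla_X\nabla_V Y-\nabla_V\nabla_X Y=\nabla_X(\tfrac{Y(f)}{f}V)-\tfrac{({}^B\nabla_X Y)(f)}{f}V$, and on expanding $X(Y(f)/f)$ the first-order terms cancel, leaving $\frac{X(Y(f))-({}^B\nabla_X Y)(f)}{f}V=\frac{H^f(X,Y)}{f}V$, which is (2). Formula (1) reduces to the base computation because horizontal fields close under $\nabla$; (3) splits into two identities — $R(X,Y)V=0$ follows because the first-order terms in $f$ cancel against $[X,Y](f)/f$, while $R(V,W)X=0$ uses torsion-freeness of the fiber connection together with $V(X(f))=0$; (4) comes from differentiating the vertical-vertical formula in a horizontal direction, where $\nabla_X(\nabla f)$ appears precisely as the covariant derivative of the gradient; and (5) is the Gauss-type equation for the fibers, where the intrinsic term ${}^F\!R(V,W)U$ separates from the correction $\frac{\langle\nabla f,\nabla f\rangle}{f^2}\{\langle V,U\rangle W-\langle W,U\rangle V\}$ coming from the normal (horizontal) part of $\nabla_V W$.

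The main obstacle will be the bookkeeping in formula (5): one must carefully split $\nabla_V W={}^F\nabla_V W-\frac{\langle V,W\rangle}{f}\nabla f$ into tangential and normal parts, track how ${}^F\nabla$ reproduces the fiber curvature ${}^F\!R$, and verify that the horizontal contributions assemble into the scalar factor $\langle\nabla f,\nabla f\rangle/f^2$, using $V(f)=0$ and $\langle\nabla f,V\rangle=0$ repeatedly. The remaining care is simply ensuring that each occurrence of $X(f)$ respects that $f$ lives on the base, which is exactly what makes the Hessian and gradient-norm terms emerge cleanly.
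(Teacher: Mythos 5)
Your proof is correct and takes the standard route: the paper itself offers no proof of this proposition, quoting it directly from \cite{chen}, where the argument is exactly the one you outline --- first the connection formulas via the Koszul formula (these are precisely Proposition \ref{propnabla} of the paper, likewise quoted from \cite{chen}), then a case-by-case expansion of $R(A,B)C=\nabla_A\nabla_BC-\nabla_B\nabla_AC-\nabla_{[A,B]}C$ using $[X,V]=0$ and the verticality/horizontality of the lifts. Your conventions are consistent with the paper's (e.g.\ your formula (2) specializes to $R(E_1,V)E_1=\tfrac{f''}{f}V$ as used later), and the cancellations you describe in (2)--(4) as well as the Gauss-equation treatment of (5), with the fibers totally umbilical with second fundamental form $-\tfrac{\langle V,W\rangle}{f}\nabla f$, are all sound.
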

\begin{proposition}\label{propnabla}
	For $X,Y\in \mathcal{L}(B)$ and $V,W\in \mathcal{L}(F)$, we have on $B\times_f F$ that
	\begin{enumerate}
		\item $\nabla_X  \mathcal{L}(B)$ is the lift of $\nabla_XY$ on $B$;
		\item $\nabla_XV=\nabla_VX=(X \ln f)V$;
		\item $nor(\nabla_VW)= -\frac{\langle V,W\rangle}{f}\nabla f$;
		\item $tan (\nabla_VW)\in\mathcal{L}(F)$ is the lift of $\nabla'_VW$ on $F$, where $\nabla '$ is the Levi-Civita connection on $F$.
 	\end{enumerate}
\end{proposition}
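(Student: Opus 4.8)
The plan is to derive all four formulas from the Koszul formula for the Levi-Civita connection, which for vector fields $A,B,C$ on $M$ reads
\begin{equation*}
2\langle \nabla_A B, C\rangle = A\langle B,C\rangle + B\langle A,C\rangle - C\langle A,B\rangle + \langle [A,B],C\rangle - \langle [A,C],B\rangle - \langle [B,C],A\rangle.
\end{equation*}
Before applying it I would record the structural facts that make every term tractable. First, by \eqref{metric} the distributions $\mathcal{L}(B)$ and $\mathcal{L}(F)$ are mutually orthogonal, so $\langle X,V\rangle=0$ whenever $X\in\mathcal{L}(B)$ and $V\in\mathcal{L}(F)$. Second, a horizontal lift $X$ acts as a derivation only on functions pulled back from $B$ and annihilates functions pulled back from $F$, while a vertical lift $V$ does the opposite; in particular $Vf=0$ since $f$ lives on $B$, and the gradient satisfies $\nabla f\in\mathcal{L}(B)$ with $Xf=\langle\nabla f,X\rangle$. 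Third, lifts of fields on different factors commute, $[X,V]=0$, whereas $[\mathcal{L}(B),\mathcal{L}(B)]\subset\mathcal{L}(B)$ and $[\mathcal{L}(F),\mathcal{L}(F)]\subset\mathcal{L}(F)$, the brackets being the lifts of the corresponding brackets on $B$ and $F$. Finally, for $V,W\in\mathcal{L}(F)$ one has $\langle V,W\rangle=f^{2}\,g_F(V,W)$, with $g_F(V,W)$ constant along horizontal directions.

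For part (1), applying Koszul with $A=X$, $B=Y$, $C=Z$ all in $\mathcal{L}(B)$ reproduces verbatim the Koszul formula of $(B,g_B)$, so $\langle\nabla_XY,Z\rangle$ equals the lift of $g_B(\nabla^B_XY,Z)$; testing instead against $C=W\in\mathcal{L}(F)$ makes every term vanish by orthogonality and the bracket relations, showing $\nabla_XY$ has no vertical component. Hence $\nabla_XY$ is the lift of $\nabla^B_XY$. For part (2), torsion-freeness together with $[X,V]=0$ gives $\nabla_XV-\nabla_VX=[X,V]=0$ at once. Testing $\nabla_XV$ against any $Y\in\mathcal{L}(B)$ yields zero, again by orthogonality and vanishing brackets, so $\nabla_XV$ is vertical; testing against $W\in\mathcal{L}(F)$ leaves only $2\langle\nabla_XV,W\rangle=X\langle V,W\rangle=X(f^{2})\,g_F(V,W)=2(X\ln f)\langle V,W\rangle$, using $X(g_F(V,W))=0$. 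Since this holds for all $W$, we obtain $\nabla_XV=(X\ln f)V$.

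For parts (3) and (4) I would decompose $\nabla_VW$ into its horizontal and vertical parts separately. Testing against $X\in\mathcal{L}(B)$, all derivative and bracket terms drop except $-X\langle V,W\rangle=-X(f^{2})\,g_F(V,W)=-\tfrac{2}{f}\langle V,W\rangle\,Xf$, whence $\langle\nabla_VW,X\rangle=-\tfrac{\langle V,W\rangle}{f}\langle\nabla f,X\rangle$; as this holds for every horizontal $X$, the component normal to the fiber is $-\tfrac{\langle V,W\rangle}{f}\nabla f$, which is (3). Testing against $U\in\mathcal{L}(F)$, every term carries the common factor $f^{2}$, because vertical fields kill $f$ so that $V(f^{2}g_F(\cdot,\cdot))=f^{2}\,V(g_F(\cdot,\cdot))$, and after factoring it out the remaining expression is exactly the Koszul formula of $(F,g_F)$: $2\langle\nabla_VW,U\rangle=f^{2}\cdot 2\,g_F(\nabla'_VW,U)=2\langle\nabla'_VW,U\rangle$. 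Hence the component tangent to the fiber is the lift of $\nabla'_VW$, which is (4).

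I do not expect a genuine obstacle here; the content lies entirely in the bookkeeping. The one point requiring care is the bracket relation $[X,V]=0$ for lifts of fields on the two factors, together with the correct handling of how horizontal and vertical fields act on the warped metric coefficients: a horizontal derivative sees only the warping factor $f^{2}$ (with $g_F$ invariant), while a vertical derivative sees only $g_F$ (with $f^{2}$ invariant). Keeping track of precisely which of the six Koszul terms survive in each of the four cases is the whole of the proof.
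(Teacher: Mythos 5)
Your proof is correct; the paper itself gives no argument for this proposition, quoting it from \cite{chen}, and the Koszul-formula computation you carry out is precisely the standard proof found in that source (going back to O'Neill's \emph{Semi-Riemannian Geometry}), so your approach agrees with the paper's in the only sense available. The bookkeeping you flag is handled correctly throughout: the orthogonality of $\mathcal{L}(B)$ and $\mathcal{L}(F)$, the vanishing bracket $[X,V]=0$ for lifts, the fact that horizontal derivatives see only $f^2$ while vertical ones see only $g_F$, and (implicitly) the nondegeneracy of the induced metric on the fibers, which is what lets you pass from $\langle\nabla_XV,W\rangle=(X\ln f)\langle V,W\rangle$ for all vertical $W$ to $\nabla_XV=(X\ln f)V$.
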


\section{Proof of Theorem \ref{The}}
\noindent Let $M^n$ be a warped product hypersurface in the pseudo-Euclidean space, of the form 
$$ M^n=I \times_f \bar M^{n-1} (c)	\overset{F}{\longmapsto} \mathbb{E}^{n+1}_s, $$
where  $I$ is a real interval, $f$ is a function defined on $I$ and the second factor $\bar M^{n-1}(c)$ is a real space form with constant sectional curvature $c$, endowed with the pseudo-Riemannian metric $g$ and of dimension $n-1$. By $\langle\cdot,\cdot\rangle$ we denote the metric of signature $s$ in the pseudo-Euclidean space.
Let $E_1=\frac{\partial}{\partial t}$ be the unit vector tangent to $I$, such that $\langle E_1, E_1\rangle= 1.$ Notice that we can always assume $E_1$ to have positive length, since, if necessary, we can change the signature of the metric on $\mathbb E^{n+1}_s$ from $s$ to $n+1-s$.
Denote by $\xi$ the normal to the hypersurface and let $\tilde \varepsilon $ such that:
$$ \langle\xi,\xi \rangle=\tilde{\varepsilon}=\pm 1. $$
%We denote by $F$ as well  the position vector on the hypersurface.\\
%\textcolor{red}{sa vad sigur daca folosesc vectorul null de pozitie F}\\
% We have $\langle F, F \rangle=\kappa$.
 %, since we know that $\tilde M^{n+1}_s (\kappa)$ reduces to the sphere $\mathbb{S}^{n+1}_s(1)$ or the hyperbolic space $\mathbb{H}^{n+1}_s(-1)$.\\
%\textcolor{red}{ We denote by $F$ the position vector on the image of the hypersurface. We have that 
%	$F$ is a null vector and so $\langle F, F \rangle=\kappa=0$, since we know that $\tilde M^{n+1} (\kappa)$ is in this case  $\mathbb{R}^{n+1}$.} \\
Let $ U, V, W$ be tangent vectors to $\bar M$. Then from equation \eqref{metric} it follows that:
$$
\langle  E_1, U\rangle=0,\quad
\langle  U, V\rangle=f^2 g( U, V)
$$
and from Proposition \ref{prop} we know that the curvature tensor on $M$ is given as follows:
\begin{align}\label{curvatureM}
\begin{split}
&R(E_1,V)E_1=\frac{f''}{f}V,\\
&R(W,V)E_1=0,\\
&R(E_1,V)W=-\langle V,W \rangle \frac{f''}{f}E_1,\\
&R(V,W)U={}^{\bar M}\!R(V,W)U+\frac{f'^2}{f^2}\{\langle V,U \rangle W-\langle W,U\rangle V\},
\end{split}
\end{align}
where ${}^{\bar M}\!R$ is the curvature tensor on $\bar M$ and is given by 
\begin{equation}\label{Gauss}
{}^{\bar M}\!R(U,V)W=c(g(V,W)U-g(U,W)V).
\end{equation}
In order to investigate our hypersurfaces we will use a method initially introduced in \cite{tsinghuapaper},  which the authors called \emph{the Tsinghua principle}. The following computations describe this method, which, eventually brings the advantage of providing a new relation for the data on the hypersurface, see equation \eqref{eq}. Let $X,Y,Z,W$ be tangent vector fields on $M^n$. First, we take the derivative with respect to $W$ in the Codazzi equation \eqref{cdz2}. This gives $$(\nabla^2h)(W,X,Y,Z)-(\nabla^2h)(W,Y,X,Z)=0.$$ We then cyclically permute $W,X,Y$ in this relation and use additionally the Ricci identity as in \eqref{ricci}, together with the first Bianchi identity to obtain
\begin{align}\label{eq}
0=h(Y,R(W,X)Z)+h(W,R(X,Y)Z)+h(X,R(Y,W)Z).
\end{align}
Let $W,X,Z\in T_q\bar M$, for $q\in \bar M$ and let $Y=E_1$. Then equation \eqref{eq} implies that 
\begin{equation}\label{new}
0=\left( c +f''f-f'^2\right) \Big(\langle X,Z \rangle h(W,E_1)-\langle W,Z \rangle h(X,E_1)\Big),
\end{equation}
which yields two cases, according to whether $c+f''f-f'^2$ vanishes or not.\\

\textbf{Case 1.} Assume 
\begin{equation}\label{case1}
c+f''f-f'^2=0.
\end{equation}
We shall see that $M^n$ has constant sectional curvature.\\
For more convenience, in this case we prefer to denote by $E_1$ the unit vector spanning the tangent space to $I$ and by $X_i$, $i=2,\ldots,n$ the orthonormal vectors spanning the tangent space to $\bar M$, with $\langle X_i, X_i\rangle =\pm 1$. Let $\Pi_{1i}=span\{E_1,X_i\}$ and $\Pi_{ij}=span\{X_i,X_j\}$ be $2$-dimensional planes in the tangent space of $T_xM^n$, for $x\in M^n$. We get immediately that the sectional curvatures of  $M^n$ for all such planes are given by $K(\Pi_{1i})=-\frac{f''}{f}$ and  $K(\Pi_{ij})=\frac{c}{f^2}-\frac{f'^2}{f^2}$.\\
By \eqref{case1}  we have $-\frac{f''}{f}=\frac{c}{f^2}-\frac{f'^2}{f^2}$ and we can prove straightforwardly that $\frac{f''}{f}$ is constant.  Hence, $M^n$ has constant sectional curvature. \\

\textbf{Case 2.} Assume $$c+f''f-f'^2\neq 0.$$
From \eqref{new} we have that  $$\langle X,Z \rangle h(W,E_1)-\langle W,Z \rangle h(X,E_1)=0. $$ 
Next, we choose $Z$ such that $\langle X,Z \rangle=0$ and $\langle W,Z\rangle\neq 0$. It follows that $h(X,E_1)=0$, for all $X\in T_q\bar M$, i.e. $SE_1\perp X$. Then there exists $\mu$ function of $t$ such that
\begin{align}
SE_1=\mu E_1
\end{align}
and we have $h(E_1,E_1)=\tilde{\varepsilon}\mu\xi$. We use the curvature relations in \eqref{curvatureM} and the Gauss equation in \eqref{gauss} to express  $R(E_1,X)E_1$. We obtain 
\begin{equation}\label{unu1}
SX=-\frac{1}{ \tilde{\varepsilon} \mu } \frac{f''}{f} X.
\end{equation}
We do similarly for $R(X,Y)X$, with $X,Y$ tangent vectors to $\bar M$, and obtain
\begin{equation}\label{unu}
(\langle X,X \rangle Y -\langle X,Y\rangle X)\left(\frac{f'^2}{f^2}-\frac{c}{f^2} \right)= \langle SY,X\rangle SX-\langle SX,X\rangle SY.
\end{equation}
In order to understand equations \eqref{unu1} and \eqref{unu}, we need to determine the form of the shape operator of $\bar M$. We prove the following proposition.
\begin{proposition}
	There exists a basis $\{X_i\}_{i=1,\ldots,n-1}$ in $T_q\bar M$ for which the shape operator restricted to $\bar M$ is given by $SX_i=\lambda X_i$, for real functions $\lambda=\lambda(t)$.
\end{proposition}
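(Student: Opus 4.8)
The plan is to prove that the restriction $T:=S|_{T_q\bar M}$ is a real scalar multiple of the identity. First I would check that $T$ is well defined and self-adjoint: since $SE_1=\mu E_1$, for $X\in T_q\bar M$ we get $\langle SX,E_1\rangle=\langle X,SE_1\rangle=\mu\langle X,E_1\rangle=0$, so $SX\in T_q\bar M$, and $T$ is self-adjoint with respect to the induced (possibly indefinite) metric on the leaf. The genuine difficulty, absent in the Riemannian setting, is that a self-adjoint operator on an indefinite inner-product space need not be diagonalizable; the real content of the proposition is to exclude the exceptional Segre types and to force a single eigenvalue.

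The main structural input is equation \eqref{unu}. Contracting it over a basis $\{e_k\}$ of $T_q\bar M$ with $\varepsilon_k=\langle e_k,e_k\rangle=\pm1$, and using $\sum_k\varepsilon_k\langle v,e_k\rangle e_k=v$, collapses the right-hand side to $T^2-(\operatorname{tr}T)\,T$ and the left-hand side to $\beta(m-1)$, where $m=n-1$ and $\beta=\tfrac{f'^2-c}{f^2}$. This yields the operator identity
\begin{equation*}
T^2-(\operatorname{tr}T)\,T-\beta(m-1)\,I=0 .
\end{equation*}
Thus $T$ annihilates a fixed quadratic, so its minimal polynomial has degree at most two; in particular a self-adjoint operator with a Jordan block of size three is excluded at once, and $T$ has at most two distinct (possibly complex) eigenvalues.

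Next I would eliminate the remaining non-diagonalizable and complex types directly from \eqref{unu}. Writing $T$ in canonical form on the invariant Lorentzian $2$-plane carrying either a complex-conjugate pair $a\pm bi$ or a size-two Jordan block with a null eigenvector, and substituting the corresponding vectors into \eqref{unu}, I expect the rotational/null part to produce a term that cannot be balanced when $\beta\neq0$, forcing $b=0$ and a trivial nilpotent part. (When the induced metric on the leaf is definite this step is vacuous, since $T$ is then automatically diagonalizable with real, non-null eigenvectors.) Once $T$ is diagonalizable with a non-null eigenvector $X$, $TX=aX$ (note $a\neq0$, as $a=0$ together with $\langle X,Z\rangle$-tests in \eqref{unu} would force $\beta=0$), testing \eqref{unu} on pairs $X,Y$ with $Y\perp X$ gives $TY=-\tfrac{\beta}{a}Y$, so $X^{\perp}$ is a single eigenspace. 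Comparing the product $ab=-\beta$ obtained this way with the product $-\beta(m-1)$ of the roots of the quadratic above forces $a=-\beta/a$, i.e. $a^2=-\beta$ and $T=\lambda\,\mathrm{Id}$, whenever $m\geq3$.

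The delicate point is the low-dimensional leaf $m=n-1=2$ (that is $n=3$), where the eigenvector test permits two distinct eigenvalues and \eqref{unu} alone is insufficient. Here, and more generally whenever $\mu\neq0$, I would invoke \eqref{unu1}, which already states $SX=-\frac{1}{\tilde{\varepsilon}\mu}\frac{f''}{f}X$ for every $X\in T_q\bar M$ and hence gives $T=\lambda\,\mathrm{Id}$ with $\lambda=-\frac{f''}{\tilde{\varepsilon}\mu f}$; the complementary possibility $\mu=0$ forces $f''=0$ through the Gauss equation for $R(E_1,X)E_1$, after which the quadratic identity applies with $\beta\neq0$. In every case $T=\lambda\,\mathrm{Id}$, so any basis $\{X_i\}$ of $T_q\bar M$ satisfies $SX_i=\lambda X_i$, and $\lambda$ depends only on $t$ since each expression obtained for it ($-f''/(\tilde{\varepsilon}\mu f)$, or a root of $\lambda^2=-\beta$) is built solely from $f,f',f''$. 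I expect the principal obstacle to be precisely the case analysis over the indefinite Segre types, namely verifying that null eigenvectors and genuine size-two Jordan blocks are incompatible with \eqref{unu} when $\beta\neq0$.
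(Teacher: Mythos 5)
Your strategy genuinely differs from the paper's. The paper invokes Petrov's canonical form for an operator that is symmetric with respect to an indefinite metric and then rules out the non-diagonal blocks by substituting specific block vectors into \eqref{unu}, concluding via \eqref{generaleq}; you instead contract \eqref{unu} into the operator identity $T^2-(\operatorname{tr}T)\,T-\beta(m-1)\,\mathrm{Id}=0$ (in your notation $m=n-1$, $\beta=\tfrac{f'^2-c}{f^2}$), which kills all Jordan blocks of size at least $3$ in one stroke; your product-of-roots argument for $m\geq 3$ and your observation that \eqref{unu1} already settles everything when $\mu\neq 0$ are both correct, and this part is cleaner than the paper's block-by-block substitutions. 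However, two of your steps contain genuine gaps.

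First, your exclusion of complex pairs and size-two Jordan blocks is only an ``expectation'', and as sketched it fails: substituting vectors of the invariant $2$-plane alone into \eqref{unu} produces no contradiction at all. On a plane with metric $\mathrm{diag}(1,-1)$ and $Te_1=ae_1-be_2$, $Te_2=be_1+ae_2$, equation \eqref{unu} is satisfied identically as soon as $a^2+b^2=-\beta$; likewise a size-two Jordan block $TX_1=\lambda X_1$, $TX_2=X_1+\lambda X_2$ (with $X_1,X_2$ null and $\langle X_1,X_2\rangle=\pm 1$) satisfies \eqref{unu} on its own block whenever $\lambda^2=-\beta$. The contradiction only appears when a block vector is paired in \eqref{unu} with a direction orthogonal to the block, so this step also requires $m\geq 3$. (The paper's proof shares this defect: its two within-block tests $R(X_1,X_2)X_1$ and $R(X_2,X_1)X_2$ both yield $\lambda_1^2=\tfrac{c-f'^2}{f^2}$, not $\lambda_1=0$ as claimed.)

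Second, the case $m=2$, $\mu=0$ cannot be closed the way you propose, nor in any other way: for $m=2$ your quadratic identity is nothing but the Cayley--Hamilton relation together with $\det T=-\beta$, which is perfectly compatible with two distinct eigenvalues $a$ and $-\beta/a$. Indeed the statement is false there: the cone $F(t,u,v)=\tfrac{t}{\sqrt 2}(\cos u,\sin u,\cos v,\sin v)$ over the Clifford torus is a warped product $I\times_t\bar M^2(0)\rightarrow\mathbb E^4$ with $f(t)=t$ non-constant and flat fiber, it lies in Case 2 since $c+ff''-f'^2=-1\neq 0$, it has $SE_1=0$ (so $\mu=0$), and its fiber principal curvatures are $\pm 1/t$, so the shape operator is not a multiple of the identity on $T_q\bar M$. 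Thus the proposition (and in fact the theorem) only holds for $n\geq 4$; your difficulty at $m=2$ is not a defect of your approach but a hole in the statement itself, which the paper's proof conceals in its final ``straightforward'' step, since deducing $\lambda_i=\lambda_j$ from the second relation of \eqref{generaleq} requires at least three distinct indices.
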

\begin{proof}
 We choose first a convenient notation for an orthonormal basis on $T_{q}\bar M$, with vectors of length $\pm 1$: $\{X_1,\ldots,X_{n-1}\}$.  We will use the result in \cite{petrov}, which states that for the shape operator $S$, which is symmetric with respect to the metric $\langle \cdot, \cdot\rangle$ (and so with respect to $g(\cdot,\cdot)$ too) there exists a basis on the tangent space of $M^n$ such that we have, for some natural positive numbers $m_i\leq m\leq n$, the following:
\begin{equation}
S=\left(
\begin{array}{ c c c}
S_1 & & \\ & \ddots & \\ & & S_m 
\end{array}
\right),
\quad 
g= \left(
\begin{array}{ c c c}
g_1 & &\\
& \ddots &\\
& & g_m
\end{array}
\right),
\end{equation}
where 
\begin{equation}\label{blocuri}
S_i=\left(
\begin{array}{ c c c c}
\lambda_i & 1 & & \\
&\lambda_i & \ddots & \\
 & & \ddots& 1\\
 &  & &\lambda_i
\end{array}
\right)_{m_i\times m_i},
\quad 
g_i= \left(
\begin{array}{  c c c}
  & & \epsilon_i\\
  &  \Ddots &\\
\epsilon_i & & 
\end{array}
\right)_{m_i\times m_i},\, \mid \epsilon_i\mid=1.
\end{equation}
Notice that the eigenvalues $\lambda_i$ may be complex, in which case $\epsilon_i\in \mathbb{C}$.\\
We will prove by contradiction that the shape operator $S$ cannot contain blocks $S_i$ of dimension $m_i>1$.
Assume first that there exists such a block of dimension $3$. Without loss of generality we may assume that $m_1=3$. Then, according to \eqref{blocuri}, we may write
\begin{align*}
SX_1&=\lambda_1 X_1,\\
SX_2&=X_1+\lambda_1 X_2,\\
SX_3&=X_2+\lambda_1 X_3.
\end{align*}
In this case, using the symmetry of the shape operator, we find that $\langle X_2,X_2\rangle =\langle X_1,X_3\rangle$, whereas we know from the expression of the metric that $ \langle X_1,X_3\rangle\neq 0$. Moreover, the equation following from \eqref{unu} for $R(X_2,X_3)X_2$ gives that $\langle X_2,X_2\rangle =0$, which is a contradiction. Hence there cannot be blocks of dimension $3$.\\
When there is at least one block of dimension $m_i$ higher than $3$, we prove similarly a contradiction by picking $X_{\alpha}, X_{\beta}$, $1\leq \alpha, \beta<n$ in \eqref{unu} to be given either by $\alpha=\frac{m_i}{2}, \beta=\alpha+2$ if $m_i$ is even, or by $\alpha=[\frac{m_i}{2}]+1, \beta=\alpha+1$ if $m_i$ is odd. Therefore $m_i\in\{1,2\}$.\\
We shall now prove there cannot be any blocks of dimension $2$ neither. Assume there exists at least one block of dimension $2$ in the matrix of the shape operator. We may assume, without loss of generality, that $m_1=2$.  Therefore
\begin{align*}
SX_1&=\lambda_1 X_1,\\
SX_2&=X_1+\lambda_1 X_2.
\end{align*}
We can write equation \eqref{unu} for $R(X_1,X_2)X_1$ and  it follows that $\lambda_1^2=\frac{c}{f^2}-\frac{f'^2}{f^2}$.\\
Similarly, for $R(X_2,X_1)X_2 $ we obtain that $\lambda_1=0$. Since we see easily that $\frac{c}{f^2}-\frac{f'^2}{f^2}$  has non-zero derivative, and hence it is not zero, we obtain a contradiction.\\
Therefore, there are only blocks of dimension $1$ in the matrix of the shape operator and so $$SX_i=\lambda_i X_i, \, i=1,\ldots,n-1.$$
Furthermore, we use \eqref{curvatureM} and  \eqref{Gauss} to evaluate $R(E_1,X_i)E_1$ and $R(X_j,X_i)X_j$. It follows,  respectively, that 
\begin{align}\label{generaleq}
\begin{split}
 - \tilde{\varepsilon}\mu\lambda_i&=\frac{f''}{f},\\  
 \tilde{\varepsilon} \lambda_i\lambda_j &=-\frac{f'^2}{f^2}+\frac{c}{f^2}.
\end{split}
\end{align}
It is straightforward to check from the second relation in \eqref{generaleq} that for indexes $1\leq i,j < n$ we must have  $\lambda_i=\lambda_j$ and we also notice that the eigenvalues of $X_i$ are real. 
\end{proof}

After determining the form of the shape operator,  the expressions of the curvature tensor of the warped product submanifold allowed us to obtain equations \eqref{generaleq}, which, if we denote $\lambda:=\lambda_i$, become
\begin{align}\label{relscurta}
\begin{split}
 - \tilde{\varepsilon}\mu\lambda&=\frac{f''}{f},\\  
 \tilde{\varepsilon} \lambda^2 &=-\frac{f'^2}{f^2}+\frac{c}{f^2}.
\end{split}
\end{align}
Moreover, using Proposition \ref{propnabla} and the form of the shape operator, we compute
\begin{align}
\begin{split}
D_{E_1}\xi=-\mu E_1,\quad &D_X\xi=-\lambda X, \quad D_XE_1=\frac{f'}{f}X,\\
D_{E_1}E_1&=h(E_1,E_1)=\tilde{\varepsilon}\mu\xi.\\
\end{split}
\end{align}
In order to determine the immersion of the submanifold, there is one last step remaining, for which we will consider two cases, according to the length of $\xi$. \\

\noindent \textbf{Case 2.1.} Let us assume first that $\tilde \varepsilon =1$.\\
 Since $f\neq 0$ and $\lambda\neq 0$, we may define a real function $\theta$ depending on $t$, by $\tan\theta=\frac{f'}{f\lambda}$. Let $\psi$ be the vector field given by 
\begin{equation}\label{psi0}
\psi=\cos\theta\cdot E_1+\sin\theta\cdot \xi
\end{equation} 
and having positive length $\langle \psi, \psi \rangle = 1.$ It follows that 
\begin{align*} 
D_X \psi&=(\cos\theta\cdot \frac{f'}{f}-\sin\theta\cdot\lambda)X\\
		&=0,
\end{align*}
 for any vector field $X$ tangent to $\bar M$, and we compute 
\begin{align*}
D_{E_1}\psi=(\theta'+\mu) (\cos\theta\cdot \xi- \sin\theta \cdot E_1).
\end{align*}
As $\tan \theta= \frac{f'}{f\lambda}$, it follows that $\theta'=\frac{f'' f \lambda-f'^2\lambda-f f'\lambda'}{f^2 \lambda^2+f'^2}$ and, using \eqref{relscurta}, we obtain that $\theta'+\mu=0$ and so,  $D_{E_1}\psi=0$. Therefore, $\psi$ is a constant vector field of length $1$ and  we may assume, without loss of generality, that $\psi \in \mathbb{R}^{n+1}_s$ is given by 
\begin{equation}\label{relpsi}
\psi=(1,0,\ldots,0).\end{equation}
Let $\varphi$ be a unit vector field orthogonal to $\psi$, defined by \begin{equation}\label{varphi0}
\varphi=-\sin\theta \cdot E_1+\cos\theta\cdot \xi.
\end{equation}
One may easily check that 
\begin{align*}
D_{E_1}\varphi &=-(\theta'+\mu) (\cos\theta\cdot  E_1+\sin\theta\cdot \xi)\\
&=0
\end{align*}
and 
\begin{align}\label{dxphi}
D_X\varphi=-(\sin\theta\frac{f'}{f}+\lambda \cos\theta)X.
\end{align}
We have, of course, that $D_X\varphi\neq 0$ and so $\varphi$ is an immersion with $\langle \varphi,\varphi \rangle=1.$ Therefore, we may parameterize it as 
\begin{align}\label{relphi}
\begin{split}
\varphi : \bar M^{n-1}(c) &\longrightarrow \mathbb{S}^{n-1}_{\tilde{s}}(1)\\
(u_1,\ldots, u_{n-1}) &\mapsto (0,y_1,\ldots,y_n),
\end{split}
\end{align}
where $y_1^2+\ldots-y_{n}^2=1$, $\tilde s\leq s$.\\
From the expressions in \eqref{psi0} and \eqref{varphi0}, we recover $E_1$ and by using relations \eqref{relpsi} and \eqref{relphi}  we obtain that
\begin{equation*}
E_1=\frac{\partial F}{\partial t}=(\cos\theta,-\sin\theta y_1,\ldots,-\sin\theta y_n).
\end{equation*}
It follows that there exists a function $G$ such that 
\begin{equation*}
F=(\int\cos\theta dt, -y_1\int\sin\theta dt,\ldots,  -y_n\int\sin\theta dt) +G(u_1,\ldots,u_{n-1})
\end{equation*}
and 
\begin{equation*}
F_{u_i}=\left(0,-\left(\int \sin\theta dt\right) y_{u_i}\right)+G_{u_i},
\end{equation*}
for $y:=(y_1,\ldots,y_{n})$ and $y_{u_i}:=\frac{\partial y}{\partial u_i}$.
We may rewrite \eqref{dxphi} as $D_X{\varphi}=-\frac{\lambda}{\cos\theta}X\equiv -\frac{\lambda}{\cos\theta} dF(X)=-\frac{\lambda}{\cos\theta} D_XF$. Therefore,  
\begin{align*}
D_{\frac{\partial}{\partial u_i}}\varphi &= -\frac{\lambda}{\cos\theta} D_{\frac{\partial}{\partial u_i}}F \quad \Longleftrightarrow\\
(0,y_{u_i})	&= -\frac{\lambda}{\cos\theta}  \left(-\int\sin\theta dt\right) \left(0,y_{u_i}\right)+G_{u_i}.
\end{align*}
It is easy to check that a primitive  $\int \sin\theta dt$ is given by $\frac{\cos\theta}{\lambda}$. With this choice of primitive, one may notice that $G_{u_i}=0$. Therefore, $G$ is constant and we have 
\begin{equation}
F(t, u_1,\ldots,u_{n-1})= ( \int\cos\theta(t)dt, -\int\sin\theta(t)dt\cdot y_1,\ldots, -\int\sin\theta(t)dt\cdot y_n )+G,
\end{equation}
or equivalently, for $f_1(t):=  \int\cos\theta(t)dt $, $f_2(t):=  -\int\sin\theta(t)dt $ and after translating the hypersurface back to the origin, we recover one of the parameterizations in \eqref{primulcaz}, Case 1.

\noindent \textbf{Case 2.2.} Let us assume that $\tilde \varepsilon =-1$.\\
\textbf{Case 2.2.a)} We will first look into the case when  $\left| \frac{f'}{f\lambda}  \right|=1$. When changing $\lambda$ into $-\lambda$ we may also change $\xi$ with $-\xi$ such that we always have $\frac{f'}{f\lambda}=1$, i.e. $\lambda=\frac{f'}{f}$.\\
We remark first that equations \eqref{relscurta} imply that $c=0$ and $f$ satisfies $\mu=\frac{f''}{f'}$.\\
Let  $\alpha=\alpha(t)$ such that $\ln \alpha(t)$ is a primitive of $\mu(t)$ and let  $\psi=\alpha(E_1+\xi)$. We obtain that 
\begin{align*}
D_X\psi=X(\alpha)(E_1+\xi)+\alpha(\frac{f'}{f}-\lambda)X=0
\end{align*} and
\begin{align*}
D_{E_1}\psi &=\alpha_t(E_1+\xi)+\alpha(-\mu \xi-\mu E_1 )\\
 			&=(\alpha_t-\mu\alpha)(E_1+\xi)\\
 			&=0.
\end{align*}
Hence, $\psi$ is a constant vector, of length $0$. Next, we define $\varphi=\frac{1}{\alpha}(-E_1+\xi)$ and compute
\begin{align*}
D_{E_1}\varphi& =-\frac{\alpha_t}{\alpha^2}(-E_1+\xi)+\frac{1}{\alpha}(\mu\xi-\mu E_1)\\
	&=(\frac{\mu}{\alpha}-\frac{\alpha_t}{\alpha^2}) (\xi-E_1)\\
	&=0
\end{align*}
and 
\begin{align*}
D_X\varphi&=X\left(\frac{1}{\lambda}\right)(-E_1+\xi)+\frac{1}{\alpha}(-D_XE_1+D_X\xi)\\
&=-\frac{2\lambda}{\alpha}X.
\end{align*}
Therefore, $\varphi$ is an immersion of dimension $n-1$ given by 
\begin{equation}
\begin{split}
\varphi:M^{n-1}\longrightarrow \mathbb{E}^{n+1}_s,\quad
\varphi=\varphi(u_1,\ldots,u_{n-1})=(y_1,\ldots,y_{n+1}),
\end{split}
\end{equation} with $y_i$ functions of $u_1,\ldots,u_{n-1}$, for $i=1,\ldots, n+1$.
Since $\psi$ is a null constant vector, we can fix it as $\psi=(1,0,\ldots,1)$. Moreover, we have that $\langle \varphi,\psi \rangle =-2$, which gives $y_1=y_{n+1}-2$, and, since $\langle \varphi, \varphi \rangle=0$, it implies that  $y_1=-1+\frac{1}{4}\sum\limits_{i=2}^n(\pm y_i^2)$ and  $y_{n+1}=1+\frac{1}{4}\sum\limits_{i=2}^n(\pm y_i^2)$. Therefore
\begin{equation}\label{phiB1}
\varphi(u_1,\ldots,u_{n})=(-1+\frac{1}{4}\sum\limits_{i=2}^n(\varepsilon_i y_i^2), y_2,\ldots, y_n,  1+\frac{1}{4}\sum\limits_{i=2}^n(\varepsilon_i y_i^2) ).
\end{equation}
Furthermore, we have
\begin{align*}
E_1 &=\frac{1}{2}\left( \frac{1}{\alpha}\psi-\alpha\varphi \right)\\
	&= \frac{1}{2}\left( \frac{1}{\alpha}+\alpha (1-\frac{1}{4}\sum\limits_{i=2}^n(\varepsilon_i y_i^2)), -\alpha y_2,\ldots,-\alpha y_n, \frac{1}{\alpha}-\alpha(1+\frac{1}{4} \sum\limits_{i=2}^n(\varepsilon_i y_i^2) )  \right).
\end{align*}
By integration with respect to $t$, we recover the position vector as
\begin{align}\label{FB1}
\begin{split}
F(t,u_2,\ldots,u_n)=& \frac{1}{2}\left(\int \frac{1}{\alpha} dt + [1-\frac{1}{4}\sum\limits_{i=2}^n(\varepsilon_i y_i^2)] \int \alpha dt, - y_2\int \alpha dt,\ldots,-y_n\int \alpha dt,\right.\\
	&\left. \int \frac{1}{\alpha}dt -[1+\frac{1}{4} \sum\limits_{i=2}^n(\varepsilon_i y_i^2) ] \int\alpha dt ) \right)+G(u_2,\ldots,u_n),
\end{split}
\end{align}for some function $G$.
We have that $ D_{\frac{\partial}{\partial u_i}}\varphi=-\frac{2\lambda}{\alpha} dF\left(\frac{\partial}{\partial u_i} \right)$. Writing this explicitly by using \eqref{phiB1} and \eqref{FB1}, we obtain  that 
\begin{equation}
1=\frac{\lambda}{\alpha}\int\alpha dt+G_{u_i}.
\end{equation}
%\begin{align*}
%\left(\frac{1}{2}\varepsilon_iu_i,0,\ldots,\underbrace{1}_{i^{th}},\ldots,0,\frac{1}{2}\varepsilon_i u_i \right)
%=&\frac{\lambda}{\alpha}\int \alpha dt \left( \frac{1}{2}\varepsilon_iu_i,0,\ldots, \underbrace{1}_{i^{th}},\ldots,0,\frac{1}{2}\varepsilon_i u_i  \right)+\frac{\partial G}{\partial u_i}.
%\end{align*}
One may easily check that $\frac{\alpha}{\lambda}$ is a primitive of the form $\int\alpha dt$. With this choice of primitive, it follows that $G$ is a constant.
Therefore, the immersion writes as 
\begin{align}
(t,u_2,\ldots,u_n)\mapsto \int\frac{1}{\alpha} dt  \left(\begin{matrix}
1\\0 \\ \vdots\\ 0\\1\\
\end{matrix} \right) +\int \alpha dt \left(\begin{matrix}
1-\frac{1}{4}\sum\limits_{i=2}^{n}\varepsilon_i y_i^2 \\ -y_2\\ \vdots\\ -y_{n}\\-1-\frac{1}{4}\sum\limits_{i=2}^{n}\varepsilon_i y_i^2
\end{matrix} \right),
\end{align}
which, up to a local diffeomorphism,  corresponds to the parameterization described in section \ref{prelim}, Case 3.

\noindent \textbf{Case 2.2.b)} Assume $\left| \frac{f'}{f\lambda} \right|<1$. Then we may define $\theta$ a function of $t$ such that 
\begin{equation}\label{tanh}
\tanh\theta=\frac{f'}{f \lambda }.
\end{equation}
Let $\psi$ be a vector field defined by
\begin{equation}
\psi=\cosh\theta\cdot E_1+\sinh\theta\cdot\xi
\end{equation}
We compute 
\begin{align*}
D_X\psi=&(\cosh\theta\cdot  \frac{f'}{f}-\sinh\theta\cdot \lambda) X,\\
=& 0
\end{align*}
and \begin{align*}
D_{E_1}\psi=& (\theta'-\mu)(\sinh\theta\cdot  E_1+\cosh\theta\cdot\xi)\\
	=&0,
\end{align*}
since $\theta'-\mu=0$. This follows easily from \eqref{tanh} and \eqref{generaleq}.
Hence, $\psi$ is a constant vector of length $1$ and we can fix it to be $\psi=(1,0,\ldots,0)$.\\
Let $\varphi=-\sinh\theta\cdot E_1+\cosh\theta\cdot \xi$ be a vector field of length $-1$, orthogonal to $\psi$. We compute the covariant derivatives of $\varphi$:
\begin{equation*}
D_{E_1}\varphi=(\theta'-\mu)(\cosh\theta\cdot E_1+\sinh\theta\cdot\xi)=0
\end{equation*}
and
\begin{equation}\label{vardxphi}
D_X\varphi=(\sinh\theta\cdot\frac{f'}{f}-\cosh\theta\cdot \lambda)X.
\end{equation}
As $D_X\varphi\neq 0,\ \forall X$ and $\langle \varphi, \varphi \rangle=-1$, we have that $\varphi$ is an immersion defined as 
\begin{equation}
\varphi: M_1^n(c)\longrightarrow \mathbb{H}^{n-1}_{\tilde s}(-1),\quad 
\varphi(u_1,\ldots,u_n)=(0,y_1,\ldots, y_{n}), 
\end{equation} 
where $\sum\limits_{i=1}^n\varepsilon_i y_i^2=-1$, for $\varepsilon_i=\pm 1$ determined according to the signature of the metric and $\tilde s< s$.\\
We may express $E_1$ as
\begin{equation*}
E_1=\cosh\theta\cdot \psi-\sinh\theta\cdot \varphi=(\cosh\theta, -\sinh\theta\cdot y ), 
\end{equation*} 
for $y=(y_1,\ldots,y_n)$. As $F_t=E_1$, we obtain by integration
\begin{equation*}
F=(\int\cos\theta dt, -\int\sinh\theta dt\cdot y)+G(u_1,\ldots,u_n)
\end{equation*}
and then, it follows that
\begin{equation*}
F_{u_i}=(0, -\int\sin\theta dt\cdot y_{u_i})+G_{u_i}.
\end{equation*}
Moreover, from \eqref{vardxphi} we may write
\begin{align*}
D_{\frac{\partial}{\partial u_i}}\varphi&=-\frac{\lambda}{\cosh\theta}\frac{\partial}{\partial u_i}\Longleftrightarrow \\ %\equiv 
%-\frac{\lambda}{\cosh\theta} F_{u_i}\, 
(0,y_{u_i})&=\frac{\lambda}{\cosh\theta} \int\sinh\theta dt \cdot (0, y_{u_i})+G_{u_i}
\end{align*}
For the particular choice of primitive $ \frac{\cosh\theta}{\lambda}$ for  $ \int\sinh\theta dt$, we see that $G$ is constant and, eventually, the immersion $F$ is given by
\begin{equation}
F=(\int\cos\theta dt, -\int\sinh\theta dt\cdot y), 
\end{equation} where $y=(y_1,\ldots,y_n)$, $\sum\limits_{i=1}^n\varepsilon_i y_i^2=-1$ and $\varepsilon_i=\pm 1$, which corresponds to the rotational hypersurfaces described in section \ref{prelim},  Case 1.

\noindent\textbf{Case 2.2.c)} Assume $\left| \frac{f'}{f\lambda} \right| >1$. Let $\theta$ be defined by 
\begin{equation*}
\coth \theta=\frac{f'}{f \lambda } 
\end{equation*}
and let $\psi$ denote the vector field
\begin{equation*}
\psi=\sinh\theta\cdot E_1+\cosh\theta\cdot \xi.
\end{equation*}
We compute the following derivatives.
\begin{align*}
D_X\psi &=(\sinh \theta \cdot \frac{f'}{f}-\cosh\theta \cdot\lambda)X\\
		&=0,
\end{align*}
and, since $\theta'-\mu=0$, 
\begin{align*}
D_{E_1}\psi &=(\theta'-\mu)(\cosh\theta\cdot  E_1+\sinh\theta\cdot \xi)\\
			&=0.
\end{align*}
Therefore, $\psi$ is a constant vector of length $1$, which we will fix to be $\psi=(1,0,\ldots,0)$.\\
Let $\varphi$ be a unit length vector field orthogonal to $\psi$, defined as
\begin{equation}
\varphi= \cosh\theta \cdot E_1+\sinh\theta\cdot\xi.
\end{equation}
We see that $\varphi$ is independent of $t$, since 
\begin{equation}
D_{E_1}\varphi=(\theta'-\mu)(\sinh\theta \cdot E_1+\cosh\theta \xi)=0.
\end{equation}
Moreover, 
\begin{equation}\label{eticheta}
D_X\varphi=(\cosh\theta\cdot \frac{f'}{f}-\sinh\theta \cdot \lambda)X
\end{equation}
and, of course, $D_X\varphi\neq 0,\forall X$. Hence, $\varphi$ is an immersion with position vector of length $1$ defined as
\begin{equation}
\varphi:\bar M^{n-1}(c)\to \mathbb{S}^{n-1}_{\tilde s}(1),\quad \varphi(u_1,\ldots,u_n)=(0,y_1,\ldots,y_n),
\end{equation}
where $\sum\limits_{i=1}^n \varepsilon_i y_i^2=1$, for $\varepsilon_i=\pm1$ according to the signature of the metric and $\tilde s\leq s$. \\
We have that 
\begin{align*}
E_1 =&-\sinh\theta\cdot\psi+\cosh\theta\cdot\varphi\\
	=& (-\sinh\theta,\cosh\theta\cdot y), \text{ for } y=(y_1,\ldots,y_n).
\end{align*}
Therefore, by integration with respect to $t$, we recover 
$$F(t,u_1,\ldots,u_n)=(-\int\sinh\theta dt, (\int\cosh\theta dt)\cdot y)+G(u_1,\dots,u_n).$$
From \eqref{eticheta} we may write $D_X\varphi=\frac{\lambda}{\sinh\theta}X $,
which implies 
\begin{align*}
D_{\frac{\partial}{\partial u_i}}\varphi&=\frac{\lambda}{\sinh\theta} D_{\frac{\partial}{\partial u_i}}F \quad \Longleftrightarrow\\
(0,y_{u_i})&=\frac{\lambda}{\sinh\theta}\left(0,(\int\cosh\theta dt )y_{u_i} \right)+G_{u_i}.
\end{align*}
We choose the primitive $\frac{\sinh\theta}{\lambda}$ for $\int\cosh\theta dt$ and we see that $G$ must be a constant. Therefore, the parameterization of the immersion $F$ becomes 
$$F(t,u_1,\ldots,u_n)=(-\int\sinh\theta dt, (\int\cosh\theta dt)\cdot y), \text{ where }y=(y_1,\ldots,y_n),$$ 
which corresponds to the rotation hypersurface described in section \ref{prelim}, Case 1.

%%%%%%%%%%%%%%%%%%%%%%%%%%%%%

\noindent {\bf Acknowledgements.}
M. Moruz is supported by a grant of the Romanian Ministry of
Education and Research, CNCS-UEFISCDI, project number PN-III-P1-1.1-PD-2019-0253, within PNCDI III.

\end{document}